\documentclass[12pt]{amsart}
\usepackage{amsmath,amsfonts,amssymb,amsthm}
\usepackage{anysize}
\marginsize{2cm}{2cm}{2cm}{2cm}
\usepackage{graphicx}
\usepackage{enumerate}

\def\z{\mathfrak{z}}
\def\u{\mathfrak{u}}

\def\g{\mathfrak{g}}
\def\h{\mathfrak{h}}
\def\n{\mathfrak{n}}
\def\v{\mathfrak{v}}
\def\a{\mathfrak{a}}

\def\C{\mathbb{C}}
\def\R{\mathbb{R}}

\def\G{\mathbb G}

\def\ad{\operatorname{ad}}

\def\alt{\raise1pt\hbox{$\bigwedge$}}
\def\pint{\langle \cdotp,\cdotp \rangle }

\theoremstyle{plain}
\newtheorem{teo}{\bf Theorem}[section]
\newtheorem{cor}[teo]{\bf Corollary}
\newtheorem{prop}[teo]{\bf Proposition}
\newtheorem{lema}[teo]{\bf Lemma}

\theoremstyle{definition}

\newtheorem{example}{Example}

\theoremstyle{remark}
\newtheorem{rem}[teo]{\bf Remark}

\newcommand{\ri}{{\rm (i)}}
\newcommand{\rii}{{\rm (ii)}}

\title{Killing-Yano $2$-forms on 2-step nilpotent Lie groups}

\author{Adri\'an Andrada}
\email{andrada@famaf.unc.edu.ar}

\author{Isabel G. Dotti}
\email{idotti@famaf.unc.edu.ar}

\date{}
\address{FaMAF-CIEM, Universidad Nacional de C\'ordoba, Ciudad Universitaria, 5000 C\'ordoba, Argentina}
\subjclass[2010]{53C15, 22E25, 53C30}
\keywords{Killing-Yano forms; parallel tensors; nilpotent Lie groups}
\thanks{Both authors were partially supported by CONICET, ANPCYT and SECYT-UNC (Argentina).}

\dedicatory{}

\begin{document}

\begin{abstract}
In this article we show that the only 2-step nilpotent Lie groups which carry a non-degenerate left invariant Killing-Yano 2-form are the complex Lie groups. In the case of 2-step nilpotent complex Lie groups arising from connected graphs, we prove that the space of left invariant Killing-Yano 2-forms is one-dimensional.
\end{abstract}

\maketitle

\section{Introduction}

A $p$-form $\omega$ on a Riemannian manifold $(M,g)$ is called \textit{Killing-Yano} if it satisfies the \textit{Killing-Yano equation},
\begin{equation}  
(\nabla_X \omega)(Y,Z_1,\ldots,Z_{p-1}) = - (\nabla_Y \omega)(X,Z_1,\ldots,Z_{p-1}), 
\end{equation}
where $\nabla$ is the Levi-Civita connection and $X,Y,Z_1,\ldots,Z_{p-1}$ are arbitrary vector fields on $M$. These forms were introduced by K. Yano in \cite{Y} and they can be considered as generalizations of Killing vector fields. Indeed, a $1$-form is Killing-Yano if and only if its dual vector field is Killing. 

Clearly, any parallel $p$-form is Killing-Yano. However, it is not easy to find examples of non-parallel Killing-Yano $p$-forms, as the following results show:
\begin{itemize}
 \item If $(M^7,g)$ is a compact manifold with holonomy $G_2$ then any Killing-Yano form is parallel \cite{Sem}. 
 \item Every Killing-Yano $p$-form on a compact quaternion-K\"ahler manifold is parallel for any $p\geq 2$ \cite{MS}.
 \item A compact simply connected symmetric space carries a non-parallel Killing-Yano $p$-form ($p\geq 2$) if and only if it is isometric to a Riemannian product $S^k\times N$, where $S^k$ is a round sphere and $k > p$ \cite{BMS}.
\end{itemize}

A natural source of examples in Riemannian geometry is provided by Lie groups equipped with left invariant Riemannian metrics. The study of left invariant Killing-Yano 2-forms on Lie groups with left invariant metrics started in \cite{BDS} and was continued in \cite{ABD,AD}. In particular, it was proved in \cite{AD} that there are no left invariant parallel non-degenerate 2-forms on nilpotent Lie groups. Therefore, if we are able to find non-degenerate Killing-Yano 2-forms on a nilpotent Lie group, then they will be automatically non-parallel. In this direction, the only known examples of left invariant Killing-Yano 2-forms on nilpotent Lie groups appear on \textit{2-step nilpotent} Lie groups, that is, when the commutator ideal is contained in the center of the group (see \cite[Theorem 5.1]{BDS}). In this article we refine this result by showing that non-degenerate Killing-Yano 2-forms appear only on \textit{complex} 2-step nilpotent Lie groups endowed with a left invariant metric (Theorem \ref{main2}) and, furthermore, any such Lie group admits a left invariant metric with non-degenerate Killing-Yano 2-forms (Proposition \ref{ida}). In Section \ref{dim=1} we exhibit a family of examples where the dimension of the space of left invariant Killing-Yano 2-forms is 1.

\

\section{Preliminaries}
A $2$-form $\omega$ on a Riemannian manifold $(M,g)$ is called \textit{Killing-Yano} if it satisfies the \textit{Killing-Yano equation},
\begin{equation}\label{yano}  
(\nabla_X \omega)(Y,Z) = - (\nabla_Y \omega)(X,Z), 
\end{equation}
where $\nabla$ is the Levi-Civita connection and $X,Y,Z$ are arbitrary vector fields on $M$ (see \cite{Y}).  The following result is clear.

\begin{lema}\label{equiv}
Let $(M,g)$ be a Riemannian manifold, $\nabla$ the Levi-Civita connection and $\omega$ a $2$-form on $M$. The following conditions are equivalent:
	\begin{enumerate}
		\item[$\ri$] $ ( \nabla_X \omega)(Y,Z) + (\nabla_Y \omega)(X,Z)=0$;
		\item[$\rii$] $ d \omega(X,Y,Z) = 3(\nabla_X\omega) (Y,Z).$
	\end{enumerate}
\end{lema}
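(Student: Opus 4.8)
The plan is to derive both implications from the intrinsic formula expressing the exterior derivative of a 2-form through a torsion-free connection. Since the Levi-Civita connection $\nabla$ has vanishing torsion, for arbitrary vector fields $X,Y,Z$ one has
\begin{equation*}
d\omega(X,Y,Z) = (\nabla_X\omega)(Y,Z) - (\nabla_Y\omega)(X,Z) + (\nabla_Z\omega)(X,Y).
\end{equation*}
I would take this identity as the starting point, together with the two skew-symmetry facts used throughout: first, that $\omega$ is a $2$-form, so $(\nabla_X\omega)(Y,Z) = -(\nabla_X\omega)(Z,Y)$; and second, that $d\omega$ is a $3$-form, hence totally antisymmetric in its three arguments.

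For the implication $\ri \Rightarrow \rii$, I would substitute condition $\ri$ into the displayed formula. The relation $(\nabla_X\omega)(Y,Z) = -(\nabla_Y\omega)(X,Z)$ immediately turns the first two summands into $2(\nabla_X\omega)(Y,Z)$. To handle the third summand I would apply $\ri$ again to rewrite $(\nabla_Z\omega)(X,Y) = -(\nabla_X\omega)(Z,Y)$, and then use the skew-symmetry of $\omega$ to obtain $-(\nabla_X\omega)(Z,Y) = (\nabla_X\omega)(Y,Z)$. Adding the three contributions gives $d\omega(X,Y,Z) = 3(\nabla_X\omega)(Y,Z)$, which is precisely $\rii$.

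For the converse $\rii \Rightarrow \ri$, I would exploit the antisymmetry of the $3$-form $d\omega$ under the transposition of its first two entries, namely $d\omega(X,Y,Z) = -d\omega(Y,X,Z)$. Evaluating $\rii$ at $(X,Y,Z)$ and at $(Y,X,Z)$ and comparing yields $3(\nabla_X\omega)(Y,Z) = -3(\nabla_Y\omega)(X,Z)$, and dividing by $3$ recovers $\ri$.

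There is no genuine obstacle in this argument: the whole computation is a short bookkeeping exercise, and the only point demanding care is the consistent use of sign conventions, both in the chosen formula for $d\omega$ and in the repeated appeals to the antisymmetry of $\omega$. This is presumably why the authors describe the result as clear.
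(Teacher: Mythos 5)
Your proof is correct, and since the paper itself offers no argument (the authors simply call the result clear), your write-up supplies exactly the standard reasoning they implicitly invoke: the torsion-free formula $d\omega(X,Y,Z)=(\nabla_X\omega)(Y,Z)-(\nabla_Y\omega)(X,Z)+(\nabla_Z\omega)(X,Y)$, the skew-symmetry of $\nabla_X\omega$, and the alternating property of $d\omega$. Both directions are carried out without error, and your sign conventions for $d\omega$ are consistent with the factor $3$ appearing in condition $\rii$ of the paper.
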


\

Let $(M,g)$ be a Riemannian manifold, $\nabla$ the Levi-Civita connection and 
$\omega$ a $2$-form on $M$ satisfying any of the conditions of Lemma 
\ref{equiv}. Then the skew-symmetric endomorphism $T$ of $TM$ defined by 
$\omega$ and $g$, that is $\omega(X,Y)=g(TX,Y)$ satisfies 
\begin{equation}\label{yano_T}  
(\nabla_X T)Y = - (\nabla_Y T)X
\end{equation} 
for all $X,Y$   vector fields on $M$. Conversely, if $T$ is a skew-symmetric 
endomorphism of $TM$ satisfying \eqref{yano_T} then the 2-form $\omega(X,Y)= 
g(TX,Y)$ satisfies any of the conditions of Lemma \ref{equiv}.  After this 
observation we will refer equivalently to a 2-form $\omega$ satisfying \eqref{yano} or a skew-symmetric 
$(1,1)$-tensor $T$ satisfying or \eqref{yano_T} as Killing-Yano (KY). Note that if $(J,g)$ is an almost Hermitian structure, then the 
fundamental $2$-form $\omega$ given by $\omega(X,Y)=g(JX,Y)$ is Killing-Yano if and only if $(J,g)$ is nearly K\"ahler.

\

Let $G$ be an $n$-dimensional Lie group and let $\g$ be the associated Lie 
algebra of all left invariant vector fields on $G$. If $T_eG$ is the  tangent 
space of $G$ at $e$, the identity of $G$, the correspondence $X \to X_e:=x$ from 
$\mathfrak g \to T_eG$ is a linear isomorphism. This isomorphism allows to 
define a Lie algebra structure on  the tangent space $T_eG$  setting, for $x, y 
\in T_eG $,  $[x,y] = [X, Y]_e$ where $X,Y$ are the left invariant vector fields 
defined by $x,y$, respectively.

A left invariant metric on $G$ is a Riemannian metric such that $L_a$, the left 
multiplication by $a\in G$, is an isometry for every $a \in G$. Conversely, 
every inner product on $T_eG$ gives rise, by left translations, to a left 
invariant metric. A Lie group equipped with a left invariant metric is therefore 
a homogeneous Riemannian manifold where many geometric invariants can be 
computed at the Lie algebra level. In particular, the Levi-Civita connection 
$\nabla$ associated to a left invariant metric $g$, when applied to left 
invariant vector fields, is given by:
\begin{equation}\label{LC}
 2\langle\nabla_x y,z \rangle = \langle [x, y],z \rangle -  
 \langle [y,z],x \rangle +  \langle [z,x],y \rangle, \qquad x,y,z \in \mathfrak g,
\end{equation}
where $\langle \cdot,\cdot \rangle$ is the inner product induced by $g$ on 
$\g$. Note that $\nabla g =0$ is equivalent to 
$\nabla_x\in\mathfrak{so}(\g,\langle \cdot,\cdot \rangle)$ for any 
$x\in\mathfrak g$. 

\

A left invariant $p$-form $\omega$ on $G$ is a $p$-form such that $L_a^*\omega 
= \omega$ for all $a\in G$. We will consider left invariant $2$-forms $\omega$ 
on $(G,g)$ satisfying \eqref{yano}. Since $\nabla \omega$ and $d\omega$ are left 
invariant as well, we will study $\omega \in \alt^2 \g^*$ satisfying 
\eqref{yano} for $x,y,z \in \g$. We will say then that $\omega$ is a 
Killing-Yano (KY) $2$-form on $\g$. Clearly, the corresponding skew-symmetric 
$(1,1)$-tensor $T$ defined by $\omega$ and $g$ is left invariant, so it is 
determined by its value in $e\in G$, which will be denoted by $T:\g\to\g$. The 
endomorphism $T$ will be called a Killing-Yano (KY) tensor on $\g$.
              
\

\begin{lema}\label{invariante}
Let $T$ be a KY tensor on a Lie algebra $\g$ with inner product $\langle 
\cdot,\cdot \rangle$. If $\u$ is a $T$-invariant Lie subalgebra of $\g$ then 
$T|_\u$ is a KY tensor on $\u$.
\end{lema}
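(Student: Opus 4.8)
The plan is to relate the Killing-Yano equation on $\u$ to the one on $\g$ through the orthogonal projection onto $\u$. Write $\g=\u\oplus\u^{\perp}$ with respect to $\langle\cdot,\cdot\rangle$ and let $P\colon\g\to\u$ be the orthogonal projection. First I would note that $T|_{\u}$ is a skew-symmetric endomorphism of $\u$ for the induced inner product, which is immediate: for $x,y\in\u$ we have $\langle T|_{\u}x,y\rangle=\langle Tx,y\rangle=-\langle x,Ty\rangle=-\langle x,T|_{\u}y\rangle$. By the remark following Lemma \ref{equiv}, it then suffices to verify \eqref{yano_T} for $T|_{\u}$ relative to the Levi-Civita connection $\nabla^{\u}$ of $\u$ equipped with the restricted bracket and metric.

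The argument rests on two observations. The first describes $\nabla^{\u}$ in terms of $\nabla$. Since $\u$ is a subalgebra, its bracket is the restriction of the bracket of $\g$ and its inner product is the restriction of $\langle\cdot,\cdot\rangle$; hence, comparing the Koszul formula \eqref{LC} computed in $\u$ and in $\g$ for $x,y\in\u$ and testing against $z\in\u$, the two right-hand sides coincide and one obtains $\nabla^{\u}_x y=P(\nabla_x y)$ for all $x,y\in\u$. The second observation uses the skew-symmetry of $T$: because $\u$ is $T$-invariant and $T$ is skew, the complement $\u^{\perp}$ is $T$-invariant as well, since for $v\in\u^{\perp}$ and $u\in\u$ we get $\langle Tv,u\rangle=-\langle v,Tu\rangle=0$. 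Equivalently, $T$ commutes with $P$, that is $TP=PT$.

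With these facts, for $x,y\in\u$ I would compute, using $Ty\in\u$ together with $\nabla^{\u}_x y=P(\nabla_x y)$ and $PT=TP$,
\begin{align*}
(\nabla^{\u}_x (T|_{\u}))\,y &= \nabla^{\u}_x(Ty)-T(\nabla^{\u}_x y)=P(\nabla_x(Ty))-T(P(\nabla_x y))\\
&= P(\nabla_x(Ty))-P(T(\nabla_x y))=P\bigl((\nabla_x T)y\bigr).
\end{align*}
Adding the expression obtained by interchanging $x$ and $y$ and invoking that $T$ is a KY tensor on $\g$, one concludes
\[
(\nabla^{\u}_x (T|_{\u}))\,y+(\nabla^{\u}_y (T|_{\u}))\,x=P\bigl((\nabla_x T)y+(\nabla_y T)x\bigr)=0,
\]
so $T|_{\u}$ satisfies \eqref{yano_T} on $\u$ and is therefore KY. The only genuinely delicate point is the second observation: it is precisely the skew-symmetry of $T$ that forces $\u^{\perp}$ to be $T$-invariant and thus allows $P$ to pass through $T$; without this, the projected connection would not interact compatibly with $T$ and the identification with $P\bigl((\nabla_x T)y\bigr)$ would fail.
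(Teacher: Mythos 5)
Your proof is correct and takes essentially the same route as the paper: both arguments use the Koszul formula \eqref{LC} to identify $\nabla^\u$ with the orthogonal projection $P$ of $\nabla^\g$ on $\u$, combined with the $T$-invariance of $\u$ and the skew-symmetry of $T$, to transfer the Killing-Yano equation from $\g$ down to $\u$. Your explicit vector identity $(\nabla^\u_x(T|_\u))y = P\bigl((\nabla^\g_x T)y\bigr)$ is simply a sharper form of the inner-product identity that the paper asserts ``follows easily'' from \eqref{LC}.
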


\begin{proof}
Let $\nabla^\u$ denote the Levi-Civita connection on $\u$ associated to the 
restriction of $\langle \cdot,\cdot \rangle$ to $\u$, and let $\nabla^\g$ denote 
the Levi-Civita connection on $\g$ associated to $\langle \cdot,\cdot \rangle$. 
Then it follows easily from \eqref{LC} that
\[ \langle (\nabla^\u_x T|_\u)x,y\rangle = \langle (\nabla^\g_x T)x,y\rangle, 
\quad x,y\in\u.   \]
Therefore if $T$ is KY on $\g$ then $T|_\u$ is KY on $\u$.
\end{proof}           

\

We recall also the following result proved in \cite{AD}, which holds in 
particular for non abelian nilpotent Lie groups.

\begin{teo}\label{nilp}\cite[Theorem 5.1]{AD}
If $\g$ is a Lie algebra with an inner product $\pint$ such that $\g'\cap\z\neq 
\{0\}$, then there is no skew-symmetric invertible parallel tensor on $\g$. In 
particular, this holds for any inner product on a non-abelian nilpotent Lie 
algebra. 
\end{teo}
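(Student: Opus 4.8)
The plan is to argue by contradiction: assume $T$ is a skew-symmetric, invertible, parallel ($\nabla T=0$) tensor on $(\g,\pint)$, fix a nonzero $\xi\in\g'\cap\z$, and squeeze out of parallelism enough information to contradict $\det T\neq 0$. Writing $D_a=\nabla_a\colon\g\to\g$ for the (left-invariant) covariant derivative operators, the tensorial identity $(\nabla_a T)=D_aT-TD_a$ shows that $\nabla T=0$ is equivalent to $[D_a,T]=0$ for every $a\in\g$. For the central element $\xi$ the Koszul formula \eqref{LC} collapses (because $[\xi,\cdot]=0$) to $\nabla_a\xi=\nabla_\xi a=-\tfrac12 j_\xi a$, where $j_\xi\in\mathfrak{so}(\g)$ is defined by $\langle j_\xi a,b\rangle=\langle\xi,[a,b]\rangle$. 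Crucially $j_\xi\neq 0$ precisely because $\xi\in\g'$: since $\langle\xi,\xi\rangle>0$ and $\xi$ is a sum of brackets, $\langle\xi,[a,b]\rangle\neq 0$ for some $a,b$. Thus $D_\xi=-\tfrac12 j_\xi$, and $[D_\xi,T]=0$ yields $[j_\xi,T]=0$.

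Next I would use parallelism along central directions to constrain $T$ algebraically. For any central $\eta$ one has $\nabla_a(T\eta)=T\nabla_a\eta=-\tfrac12 j_\eta(Ta)$, using $[T,j_\eta]=0$, so the endomorphism $a\mapsto\nabla_a(T\eta)$ equals $-\tfrac12 j_\eta T$, which is \emph{symmetric}. On the other hand, a short computation with \eqref{LC} shows that for any left-invariant $w$ the antisymmetric part of $a\mapsto\nabla_a w$ is exactly $-\tfrac12 j_w$, since $\langle\nabla_a w,b\rangle-\langle\nabla_b w,a\rangle=-\langle w,[a,b]\rangle$. Comparing, $j_{T\eta}=0$, i.e. $T\eta\perp\g'$. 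Hence $T(\z)\perp\g'$, equivalently $T(\g')\subseteq\z^\perp$; in particular $T\xi\in\z^\perp\cap(\g')^\perp=(\z+\g')^\perp$, while $\xi$ itself lies in $\z\cap\g'$.

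The main obstacle is to turn these constraints into a genuine contradiction with invertibility, since $T$ commuting with $j_\xi$ and satisfying $T(\z)\perp\g'$ is by itself only an inequality between $\dim\z+\dim\g'$ and $\dim\g$. The cleanest route I expect to work is the de Rham--K\"ahler one: because $T$ is parallel and skew-symmetric, $T^2$ is a parallel, symmetric, negative-definite field; its eigendistributions are $\nabla$-parallel, so the simply connected group $G$ splits as a Riemannian product on whose factors $T$ restricts, after rescaling, to a parallel orthogonal complex structure. Thus $(G,g)$ is K\"ahler, and for a nilpotent $\g$ this forces $\g$ to be abelian, contradicting $\g'\neq\{0\}$ (which holds since $\g'\cap\z\neq\{0\}$). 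This settles the non-abelian nilpotent case stated in the theorem.

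The subtle points I anticipate are verifying that the de Rham factors reassemble into a genuinely left-invariant K\"ahler structure and invoking the theorem that a nilpotent Lie group with a left-invariant K\"ahler metric is abelian. As an alternative that stays at the Lie-algebra level, I would push the relations above further by bringing in curvature: from $\nabla_\xi=-\tfrac12 j_\xi$ one computes $R(\xi,a)\xi=\tfrac14\, j_\xi^2\,a$, so the sectional curvatures $\langle R(\xi,a)\xi,a\rangle=-\tfrac14\langle j_\xi a,j_\xi a\rangle\le 0$ are non-positive and strictly negative for some $a$; since a parallel $T$ commutes with every curvature operator and with the symmetric operator $j_\xi^2$, one tries to combine this sign information with $T(\z)\perp\g'$ to exhibit a nonzero vector in $\ker T$. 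Locating that kernel vector is exactly where the real work lies.
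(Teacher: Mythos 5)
The paper itself does not prove this statement---it is imported verbatim from \cite[Theorem 5.1]{AD}---so your attempt has to be judged on its own terms rather than against an in-paper argument. The first half of your proposal is correct and cleanly executed: for central $\eta$ the Koszul formula indeed collapses to $\nabla_a\eta=\nabla_\eta a=-\tfrac12 j_\eta a$, parallelism gives $[j_\eta,T]=0$, and comparing the symmetric map $a\mapsto\nabla_a(T\eta)=-\tfrac12 j_\eta Ta$ with the general fact that the skew part of $a\mapsto\nabla_a w$ is $-\tfrac12 j_w$ correctly yields $j_{T\eta}=0$, i.e.\ $T(\z)\perp\g'$. Your K\"ahler reduction is also sound, and easier than you fear: no de Rham splitting or reassembly is needed, because the polar decomposition $J=T(-T^2)^{-1/2}$ (equivalently your eigenspace rescaling) is purely algebraic on $\g$, hence automatically left invariant, parallel, orthogonal, and squares to $-\operatorname{Id}$. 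Combined with the classical facts that a unimodular Lie group with left-invariant K\"ahler metric is flat (Hano) and that flat metric Lie algebras have Milnor's structure (which forces a nilpotent one to be abelian, and in fact forces $\g'\cap\z=\{0\}$ in general), this settles the ``in particular'' clause, and even the full statement for \emph{unimodular} $\g$.

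The genuine gap is that the theorem is asserted for \emph{every} metric Lie algebra with $\g'\cap\z\neq\{0\}$, and your proposal proves nothing in the non-unimodular case, where Hano's theorem is unavailable. Concretely, take $\g=\aff(\R)\oplus\h_3$: it is non-unimodular, and $\g'\cap\z=\z(\h_3)\neq\{0\}$, so the theorem applies to it; but non-unimodular Lie groups do carry left-invariant K\"ahler metrics ($\aff(\R)$ itself does), so ``$(G,g)$ is K\"ahler'' is not by itself a contradiction here---ruling out a parallel orthogonal complex structure on such a $\g$ is exactly the content of the theorem, not something you can delegate to it. Your fallback curvature route is explicitly left unfinished at precisely the decisive step (``locating that kernel vector is exactly where the real work lies''), and the constraints you do have---$[j_\xi,T]=0$, $T(\z)\perp\g'$, and the resulting identity $\ad_{T\xi}+\ad_{T\xi}^{*}=Tj_\xi$---do not by themselves force $\ker T\neq\{0\}$; one has to extract more from parallelism in the non-central directions. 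So as it stands the proposal establishes the nilpotent clause by appeal to a substantial external theorem, extends (implicitly) to the unimodular case, but leaves the theorem as actually stated unproven.
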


\

\section{Main results}

We will consider next the case of a $2$-step nilpotent Lie algebra $\n$ 
equipped with an inner product $\pint$. We recall that a Lie algebra $\n$ is 
called $2$-step nilpotent if it is not abelian and $[[\n,\n],\n]=0$, or 
equivalently, the commutator ideal $\n':=[\n,\n]$ is contained in the center 
$\z$ of $\n$. Such a Lie algebra is unimodular and its associated simply 
connected Lie group is diffeomorphic to a Euclidean space $\R^n$ via the 
exponential map.

If $\n$ is a $2$-step nilpotent Lie algebra equipped with an inner product 
$\pint$, then there is an orthogonal decomposition $\n=\z\oplus\v$, where $\z$ 
is the center of $\n$ and $\v$ is its orthogonal complement.

\

In \cite{BDS} a characterization of $2$-step nilpotent Lie groups with left 
invariant metrics admitting a left invariant Killing-Yano tensor was obtained. 
Namely, the following result was proved:
 
\begin{teo}\label{bds}\cite[Theorem 3.1]{BDS} 
Let $T$ be a skew-symmetric endomorphism of $\n= \z \oplus \v$. Then $T$ is a 
Killing-Yano tensor on $\n$ if and only if 
\begin{equation}\label{KY}
T(\z)\subseteq \z, \quad \text{hence also} \quad T(\v)\subseteq \v;
\end{equation}
and 
\begin{equation}\label{KY1}
T[x,y]=3[Tx,y] \quad\text{for all } x,y\in\v.
\end{equation}
\end{teo}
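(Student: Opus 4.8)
The plan is to reduce the Killing--Yano equation \eqref{yano_T} to a single pointwise quadratic condition and then expand it against the explicit Levi--Civita connection of a $2$-step nilpotent metric Lie algebra. Since the map $(x,y)\mapsto (\nabla_x T)y+(\nabla_y T)x$ is symmetric in $x,y$ and $\n$-valued, polarization shows that \eqref{yano_T} holds for all $x,y$ if and only if
\[ (\nabla_x T)x=0 \qquad \text{for all } x\in\n. \]
So it suffices to impose this one identity and read off its consequences in both directions.

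First I would record the connection on $\n=\z\oplus\v$. Introducing $j\colon\z\to\mathfrak{so}(\v)$ by $\langle j(z)x,y\rangle=\langle[x,y],z\rangle$ for $x,y\in\v$, $z\in\z$, formula \eqref{LC} gives
\[ \nabla_x y=\tfrac12[x,y],\qquad \nabla_x z=\nabla_z x=-\tfrac12\,j(z)x,\qquad \nabla_z w=0, \]
for $x,y\in\v$ and $z,w\in\z$. I then write $T$ in block form with respect to $\z\oplus\v$, with skew diagonal blocks $A\colon\z\to\z$, $D\colon\v\to\v$ and off-diagonal blocks $B\colon\v\to\z$, $C\colon\z\to\v$ satisfying $C=-B^{*}$ by skew-symmetry of $T$; condition \eqref{KY} is then precisely $C=0$ (equivalently $B=0$). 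Substituting $x=v+z$ into $(\nabla_x T)x=0$ and collecting the pure-$\v$, pure-$\z$ and mixed contributions, each split into its $\z$- and $\v$-parts, produces a short list of identities in $A,B,C,D$ and $j$.

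The main obstacle is to extract from this list the vanishing of the off-diagonal blocks, i.e. \eqref{KY}. The decisive identity is the $\z$-part of the mixed term, of the form $\tfrac12[v,Cz]+B\,j(z)v=0$. Pairing it with $z'\in\z$ and using $\langle B\,j(z)v,z'\rangle=-\langle j(z)v,Cz'\rangle$ together with $\langle[v,Cz],z'\rangle=\langle j(z')v,Cz\rangle$, I obtain
\[ \tfrac12\langle j(z')v,Cz\rangle=\langle j(z)v,Cz'\rangle. \]
Swapping $z$ and $z'$ and combining the two versions forces $\langle j(z')v,Cz\rangle=0$ for all $z,z'\in\z$ and $v\in\v$. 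Because $\z$ is exactly the center of $\n$, the vectors $j(z')v$ span $\v$ (any $u\in\v$ orthogonal to all of them would satisfy $[u,\v]=0$ and hence lie in $\z\cap\v=\{0\}$); therefore $Cz=0$, so $C=B=0$ and $T$ preserves both $\z$ and $\v$.

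Once \eqref{KY} is established the off-diagonal identities become vacuous, and the surviving ones---the $\z$-part of the pure-$\v$ term, $[v,Dv]=0$, and the $\v$-part of the mixed term, $j(Az)v=2D\,j(z)v-j(z)Dv$---translate, through the defining relation of $j$ and the skew-symmetry of $A$ and $D$, into $[Dv,w]+[Dw,v]=0$ together with $A[v,w]=2[v,Dw]+[Dv,w]$. These two are together equivalent to $A[v,w]=3[Dv,w]$, which is exactly \eqref{KY1}, namely $T[x,y]=3[Tx,y]$ on $\v$. For the converse I would run the computation backwards: assuming \eqref{KY} and \eqref{KY1}, every component of $(\nabla_x T)x$ vanishes by direct substitution, so $T$ is Killing--Yano. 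The only genuinely delicate point is the spanning argument above; everything else is bookkeeping with \eqref{LC} and the skew-symmetry of $T$.
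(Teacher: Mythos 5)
Your proof is correct and complete; the only caveat is that there is nothing in the paper to compare it with, since the paper does not prove this theorem but quotes it from \cite[Theorem 3.1]{BDS}. Judged on its own merits, your argument holds up at every point that carries real content. (i) The polarization reduction of \eqref{yano_T} to the single identity $(\nabla_x T)x=0$ is valid, and your connection formulas $\nabla_x y=\tfrac12[x,y]$, $\nabla_x z=\nabla_z x=-\tfrac12 j(z)x$, $\nabla_z w=0$ are exactly what \eqref{LC} gives on a $2$-step nilpotent metric Lie algebra. (ii) The decisive step is right: pairing $\tfrac12[v,Cz]+Bj(z)v=0$ against $z'$ yields $\tfrac12\langle j(z')v,Cz\rangle=\langle j(z)v,Cz'\rangle$, and swapping $z\leftrightarrow z'$ gives $\langle j(z')v,Cz\rangle=\tfrac14\langle j(z')v,Cz\rangle$, forcing $\langle j(z')v,Cz\rangle=0$; your spanning argument then correctly exploits that $\z$ is the \emph{full} center, so an element of $\v$ orthogonal to all $j(z')v$ commutes with $\v$ (and automatically with $\z$), hence lies in $\z\cap\v=\{0\}$, giving $C=B=0$, which is \eqref{KY}. (iii) Once the off-diagonal blocks vanish, the two surviving identities $[v,Dv]=0$ and $j(Az)v=2Dj(z)v-j(z)Dv$ are, as you claim, jointly equivalent to \eqref{KY1}: polarizing the first gives $[Dv,w]=[v,Dw]$, which converts $A[v,w]=2[v,Dw]+[Dv,w]$ into $A[v,w]=3[Dv,w]$, and conversely \eqref{KY1} together with skew-symmetry of the bracket recovers both identities, which also settles the converse direction of the theorem by reading the computation backwards. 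Your organization of the proof via the block decomposition of $T$ and Eberlein's operators $j(z)$ is a clean, self-contained packaging of what is, in \cite{BDS}, a direct computation with the Koszul formula; nothing is missing.
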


\medskip

Note that \eqref{KY1} implies $[Tx,y]=[x,Ty]$ for any $x,y\in\v$.

\medskip

\begin{cor}
There are no nearly K\"ahler structures on 2-step nilpotent Lie algebras.
\end{cor}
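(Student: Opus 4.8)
The plan is to reduce a nearly K\"ahler structure to a non-degenerate Killing-Yano tensor and then exploit the algebraic rigidity of \eqref{KY1}. Suppose $(J,g)$ is a nearly K\"ahler structure on the $2$-step nilpotent Lie algebra $\n=\z\oplus\v$. By the observation following \eqref{yano_T}, the compatibility $g(JX,JY)=g(X,Y)$ forces $J$ to be skew-symmetric with respect to $\pint$ (since then $J^{-1}=-J$, so $J^{T}=J^{-1}=-J$), and the nearly K\"ahler condition says precisely that the fundamental $2$-form is Killing-Yano; hence $T:=J$ is a KY tensor on $\n$. The crucial extra feature is that $J$ is invertible, as $J^{2}=-\I$.

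Next I would feed $T=J$ into Theorem \ref{bds}. This yields $J(\z)\subseteq\z$ and $J(\v)\subseteq\v$, together with the identity $J[x,y]=3[Jx,y]$ for all $x,y\in\v$. Because $J$ preserves $\v$, the element $Jx$ again lies in $\v$, so I am allowed to substitute $x\mapsto Jx$ into this identity, and because $J$ preserves $\z\supseteq\n'$, applying $J$ to a bracket stays inside the subspace where \eqref{KY} operates.

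The key computation then runs as follows. On one hand, substituting $Jx$ for $x$ and using $J^{2}=-\I$ gives $J[Jx,y]=3[J^{2}x,y]=-3[x,y]$. On the other hand, rewriting the original identity as $[Jx,y]=\tfrac13 J[x,y]$ and applying $J$ gives $J[Jx,y]=\tfrac13 J^{2}[x,y]=-\tfrac13[x,y]$. Comparing the two expressions forces $\tfrac83[x,y]=0$, that is, $[x,y]=0$ for all $x,y\in\v$.

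Finally, since $\z$ is central we have $[\n,\n]=[\v,\v]$, so $[\v,\v]=0$ would make $\n$ abelian, contradicting the standing assumption that a $2$-step nilpotent Lie algebra is non-abelian. I do not anticipate a genuine obstacle here: the whole argument is this short substitution trick powered by $J^{2}=-\I$, and the only point requiring care is to verify that all the brackets and all the images under $J$ remain inside the subspaces $\v$ and $\z$ on which \eqref{KY} and \eqref{KY1} apply.
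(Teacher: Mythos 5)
Your proof is correct and follows essentially the same route as the paper: both reduce the nearly K\"ahler structure to the statement that $J$ is a Killing-Yano tensor, then apply \eqref{KY1} together with $J^2=-\I$ (substituting $Jx$ for $x$, which is legitimate since $J(\v)\subseteq\v$) to force $[x,y]=0$ for all $x,y\in\v$, contradicting non-abelianity. The paper's chain $[x,y]=-J^2[x,y]=-3J[Jx,y]=-9[J^2x,y]=9[x,y]$ is just your two evaluations of $J[Jx,y]$ written in a single line, so the arguments coincide.
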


\begin{proof}
If $(J,g)$ is a nearly K\"ahler structure on a 2-step nilpotent Lie algebra 
$\n$, then $J$ is a KY tensor on $\n$. Therefore, for $x,y\in\v$,
\[ [x,y]=-J^2[x,y]=-3J[Jx,y]=-9[J^2x,y]=9[x,y]\]
and therefore $[x,y]=0$, so that $\n$ would be abelian.
\end{proof}

\

We show next that the existence of a KY tensor on a 2-step nilpotent Lie algebra $\n$ imposes strong restrictions on $\n$. Indeed,

\begin{teo}\label{main1}
Let $\n$ be a $2$-step nilpotent Lie algebra with an inner product $\pint$. If 
$T:\n\to\n$ is a KY tensor on $\n$ then:
\begin{enumerate}
\item[$(a)$] $\n$ is isometrically isomorphic to a direct product of ideals 
$\n=\n_1\times \n_2$, where $T|_{\n_1}=0$, $\n_2$ is $T$-invariant and 
$T|_{\n_2}$ is an invertible KY tensor on $\n_2$;
\item[$(b)$] $T$ is parallel if and only if $\n_2$ is abelian. Moreover, if 
$\z=\n'$ then $T=0$.
\end{enumerate}
\end{teo}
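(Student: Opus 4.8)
The plan is to extract everything from Theorem~\ref{bds}, which guarantees that $T$ preserves both $\z$ and $\v$ and that $T[x,y]=3[Tx,y]=3[x,Ty]$ for all $x,y\in\v$. For $(a)$ I would take $\n_1=\ker T$ and $\n_2=\operatorname{im}T=(\ker T)^\perp$, so that $\n=\n_1\oplus\n_2$ is an orthogonal splitting with $T|_{\n_1}=0$ and $T|_{\n_2}$ injective, hence invertible. Since $T$ preserves $\z$ and $\v$, both summands split compatibly: writing $\z_i,\v_i$ for the kernel ($i=1$) and the image ($i=2$) of $T|_\z$ and $T|_\v$ respectively, one has $\n_1=\z_1\oplus\v_1$ and $\n_2=\z_2\oplus\v_2$.

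The core of $(a)$ is to prove that $\n_1,\n_2$ are ideals with $[\n_1,\n_2]=0$. As $\n$ is $2$-step nilpotent, every bracket lies in the central ideal $\z$, so only $[\v,\v]$ is relevant. For $x\in\v_1$ and $y\in\v$, the relation $[x,Ty]=[Tx,y]=0$ yields $[\v_1,\v_2]=0$ (because $T\v=\v_2$), and $T[x,y]=3[Tx,y]=0$ yields $[\v_1,\v]\subseteq\z_1$. For $u=Tx$, $v\in\v_2$ and $w\in\z_1$, the computation $\langle[Tx,v],w\rangle=\tfrac13\langle T[x,v],w\rangle=-\tfrac13\langle[x,v],Tw\rangle=0$ (since $Tw=0$) gives $[\v_2,\v_2]\subseteq\z_2$. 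Hence $\n_1$ and $\n_2$ are ideals and $[\n_1,\n_2]=0$; the decomposition being orthogonal, $\n$ is isometrically isomorphic to the direct product $\n_1\times\n_2$, and $T|_{\n_2}$ is an invertible KY tensor by Lemma~\ref{invariante}.

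For $(b)$ I would compute the connection from \eqref{LC}, obtaining $\nabla_xy=\tfrac12[x,y]$, $\nabla_xw=\nabla_wx=-\tfrac12\,j(w)x$ and $\nabla_ww'=0$ for $x,y\in\v$ and $w,w'\in\z$, where $j\colon\z\to\mathfrak{so}(\v)$ is given by $\langle j(w)x,y\rangle=\langle w,[x,y]\rangle$. Substituting \eqref{KY1} then gives $(\nabla_xT)y=-[x,Ty]$ for $x,y\in\v$ and $(\nabla_xT)w=-T\,j(w)x$ for $w\in\z$. Therefore $T$ is parallel if and only if $[x,Ty]=0$ for all $x,y\in\v$ (once $T|_\v=0$ the second component vanishes automatically); since $Ty\in\v$ and any vector of $\v$ commuting with all of $\v$ lies in $\z(\n)\cap\v=\{0\}$, this happens exactly when $T|_\v=0$, i.e. $\v_2=0$, i.e. $\n_2=\z_2$ is abelian. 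This proves the first assertion of $(b)$.

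For the final claim, note that $\z=\n'$ makes $j$ injective, since $j(w)=0$ forces $w\perp\n'=\z$. If $T$ is parallel then $\n_2$ is abelian, so $[\v_2,\v_2]=0$ together with $[\v_1,\v_2]=0$ forces $\v_2\subseteq\z(\n)\cap\v=\{0\}$; then $\n_2=\z_2\subseteq\z=[\v_1,\v_1]\subseteq\z_1$ gives $\z_2=0$ and $T=0$, in accordance with Theorem~\ref{nilp} since $\n'\cap\z=\n'\neq\{0\}$. The substantial point is to exclude a nonzero (hence non-parallel) $T$, and here one must use \eqref{KY1} beyond parallelism: pairing it with $w\in\z$ produces the operator identity $j(Tw)=3\,T|_\v\,j(w)$, which together with injectivity of $j$ links the spectrum of $T|_\z$ to that of $T|_\v$. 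Showing that this spectral rigidity is incompatible with $\z=\n'$ is, I expect, the main obstacle; the structural reductions of $(a)$ and the parallel case of $(b)$ are routine by comparison.
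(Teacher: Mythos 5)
Your argument for (a) and for both halves of (b) is correct; the problem is your final paragraph, which declares the proof incomplete because of a claim that is not actually part of the theorem. The clause ``Moreover, if $\z=\n'$ then $T=0$'' is to be read inside the parallel case: it asserts that a \emph{parallel} KY tensor vanishes when $\z=\n'$ (equivalently, by the first half of (b), that $\n_2$ abelian together with $\z=\n'$ forces $T=0$). That is precisely what the paper proves --- its proof reads ``these conditions imply that $T$ is parallel, and $T=0$ if $\n'=\z$'', where ``these conditions'' are $T|_\v=0$ and $T|_{\n'}=0$ --- and it is precisely what you prove in the sentence beginning ``If $T$ is parallel then $\n_2$ is abelian\dots''. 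The unconditional statement you set up as ``the main obstacle'' --- that \emph{every} KY tensor on $(\n,\pint)$ with $\z=\n'$ vanishes --- is false, and the paper itself refutes it: by Proposition~\ref{ida} the complex Heisenberg algebra $\h_3(\C)$, which satisfies $\z=\n'$, carries an invertible KY tensor for any Hermitian inner product, and Theorem~\ref{graph} exhibits a one-dimensional (not zero-dimensional) space of KY tensors on connected graph algebras, all of which have $\z=\n'$. So no ``spectral rigidity'' argument is needed (or possible), and your proof, as it stands, is already complete.

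On the comparison: your part (a) is essentially the paper's proof (both establish that $\ker T$ and $\operatorname{Im}T$ are ideals using \eqref{KY1} and finish with Lemma~\ref{invariante}). Your part (b), however, takes a genuinely different route. The paper obtains ``$\n_2$ non-abelian $\Rightarrow$ $T$ not parallel'' by citing Theorem~\ref{nilp} (applied to the invertible tensor $T|_{\n_2}$) and checks the converse by hand; you instead compute $\nabla T$ in closed form via $j\colon\z\to\mathfrak{so}(\v)$, and your formulas $(\nabla_xT)y=-[x,Ty]$ and $(\nabla_xT)w=-T\,j(w)x$ (the latter resting on the identity $j(Tw)=3\,T|_\v\,j(w)$, which is correct) show at once that $T$ is parallel if and only if $T|_\v=0$, i.e.\ if and only if $\n_2$ is abelian. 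This is self-contained --- no appeal to Theorem~\ref{nilp} --- and yields the explicit covariant derivative as a bonus. One small reordering would tighten it: the equivalence ``$T|_\v=0 \iff \n_2$ abelian'' uses that $[\v_1,\v_2]=0$ and $[\v_2,\v_2]=0$ force $\v_2\subseteq\z\cap\v=\{0\}$, an argument you only spell out in the last paragraph; it belongs with the first claim of (b).
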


\begin{proof}
Since $T$ is skew-symmetric we have an orthogonal decomposition $\n=\ker 
T\oplus \operatorname{Im} T$ into $T$-invariant subspaces. Recall that we also 
have another orthogonal decomposition $\n=\z\oplus \v$, where $\z$ is the center 
of $\n$. According to Theorem \ref{bds} both $\z$ and $\v$ are $T$-invariant, 
therefore, if $x\in\ker T$ is decomposed as $x=x_1+x_2$ with $x_1\in\z$ and 
$x_2\in\v$, then $0=Tx=Tx_1+Tx_2$ with $Tx_1\in\z$ and $Tx_2\in\v$. Hence, 
$Tx_1=0$ and $Tx_2=0$, so that $\ker T=(\ker T\cap \z)\oplus (\ker T\cap \v)$.  
In the same way it is shown that $\operatorname{Im} T=(\operatorname{Im} T\cap 
\z)\oplus (\operatorname{Im} T\cap \v)$.

We will show next that both $\ker T$ and $\operatorname{Im} T$ are ideals of 
$\n$. Indeed:
\begin{itemize}
\item if $x\in\ker T,\,y\in\n$, then $x=x_1+x_2$, $y=y_1+y_2$ with $x_1\in\ker 
T\cap \z$, $x_2\in \ker T\cap\v$ and $y_1\in\z$, $y_2\in\v$. Hence, 
$T[x,y]=T[x_2,y_2]=3[Tx_2,y_2]=0$, using \eqref{KY1}. Thus $\ker T$ is an ideal 
of $\n$.
\item if $Tx\in \operatorname{Im} T$ and $y\in\n$, then $x=x_1+x_2$ and 
$y=y_1+y_2$ with $x_1,y_1\in \z$ and $x_2,y_2\in \v$. Then we have 
$[Tx,y]=[Tx_2,y_2]=\frac13 T[x_2,y_2]\in \operatorname{Im} T$, thus 
$\operatorname{Im} T$ is an ideal of $\n$.
\end{itemize}
Therefore, setting $\n_1:=\ker T$ and $\n_2:=\operatorname{Im} T$ we obtain 
$\n=\n_1\times \n_2$. Clearly, $T|_{\n_1}=0$, $\n_2$ is $T$-invariant and 
$T|_{\n_2}$ is invertible. Moreover, it follows from Lemma \ref{invariante} that 
$T|_{\n_2}$ is a KY tensor on $\n_2$. This proves $(a)$.

\medskip

It is clear that $\n_2$ is either abelian or $2$-step nilpotent. If $\n_2$ is 
$2$-step nilpotent, it follows from Theorem \ref{nilp} that $T|_{\n_2}$ is not 
parallel, since it is invertible. Now, if $\n_2=\operatorname{Im}T$ is abelian, 
then this means that $\operatorname{Im}T\subseteq\z$. Hence $T|_\v=0$ and it 
follows from \eqref{KY1} that $T|_{\n'}=0$. It is easy to verify that these 
conditions imply that $T$ is parallel, and $T=0$ if $\n'=\z$. Therefore $(b)$ 
follows.
\end{proof}

\medskip

\begin{rem}
Note that when the KY tensor $T$ is invertible the KY 2-form $\omega$ 
associated to $T$, $\omega(X,Y)=g(TX,Y)$, is non-degenerate.
\end{rem}

\medskip

As a consequence of Theorem \ref{main1}, we may restrict ourselves to the study 
of \textit{invertible} KY tensors on 2-step nilpotent Lie groups. A large number 
of such examples appear in complex 2-step nilpotent Lie groups, as the following 
result shows:

\begin{prop}\label{ida}
Let $N$ be a complex 2-step nilpotent Lie group equipped with a left invariant Hermitian metric $g$. Then $(N,g)$ admits a non-parallel invertible KY tensor.
\end{prop}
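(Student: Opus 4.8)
The plan is to exhibit an explicit invertible KY tensor built from the complex structure of $N$. Write $\n$ for the Lie algebra of $N$ and let $J$ denote its complex structure, so that the bracket is $\C$-bilinear, i.e. $[Jx,y]=[x,Jy]=J[x,y]$ for all $x,y\in\n$. Since $g$ is Hermitian, $J$ is orthogonal with respect to $\pint$, hence skew-symmetric. First I would record that the center $\z$, its orthogonal complement $\v$, and the commutator ideal $\n'$ are all $J$-invariant: the ideals $\z$ and $\n'$ are complex (for $z\in\z$ one has $[Jz,y]=J[z,y]=0$, so $Jz\in\z$, and $J[x,y]=[Jx,y]\in\n'$), while $\v=\z^{\perp}$ is $J$-invariant because $J$ is orthogonal and $\z$ is $J$-invariant. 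As $\n$ is $2$-step nilpotent we have $\n'\subseteq\z$, so $\z=\n'\oplus\z_0$ with $\z_0:=(\n')^{\perp}\cap\z$, and both summands are again $J$-invariant.

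The key observation is that the factor $3$ in the KY condition \eqref{KY1} can be absorbed by rescaling $J$ on $\n'$. Accordingly I would define $T:\n\to\n$ by $T=J$ on $\z_0\oplus\v$ and $T=3J$ on $\n'$. Because these three subspaces are mutually orthogonal and each is $J$-invariant, $T$ is well defined, skew-symmetric (each block is a nonzero multiple of the skew-symmetric map $J$) and invertible. It then remains to verify the two conditions of Theorem \ref{bds}. Condition \eqref{KY} is immediate, since $T(\n')=\n'$ and $T(\z_0)=\z_0$ both lie in $\z$. For \eqref{KY1}, take $x,y\in\v$; then $[x,y]\in\n'$, so $T[x,y]=3J[x,y]$, while $3[Tx,y]=3[Jx,y]=3J[x,y]$ by $\C$-bilinearity of the bracket, and the two sides agree. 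Thus $T$ is a KY tensor.

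Finally, to see that $T$ is non-parallel I would invoke Theorem \ref{nilp}: $\n$ is a non-abelian nilpotent Lie algebra and $T$ is a skew-symmetric invertible tensor, so it cannot be parallel. Equivalently, one could apply Theorem \ref{main1}$(b)$, observing that $\operatorname{Im}T=\n$ is not abelian.

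I do not expect a serious obstacle here; the only real content is guessing the construction. The step most likely to need care is the bookkeeping of the $J$-invariance of $\z,\v,\n'$ together with the orthogonal splitting $\z=\n'\oplus\z_0$, which is precisely what makes the piecewise-rescaled map $J$ both well defined and skew-symmetric. Once the scaling by $3$ on $\n'$ is spotted, the verification of the conditions of Theorem \ref{bds} and the non-parallelism of $T$ are routine.
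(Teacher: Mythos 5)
Your proof is correct and is essentially the paper's own argument: both rescale the bi-invariant complex structure $J$ by $3$ on $\n'$ and verify the hypotheses of Theorem \ref{bds}, then invoke Theorem \ref{nilp} for non-parallelism. The only (immaterial) difference is that on the complement of $\n'$ in $\z$ you take $T=J$, whereas the paper allows an arbitrary skew-symmetric isomorphism there, which merely yields a larger family of examples.
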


\begin{proof}
Let $J$ denote the associated bi-invariant complex structure on $N$. Therefore 
the restriction of $J$ to $\n$, the Lie algebra of $N$, satisfies: 
$J:\n\to\n$, $J^2=-\operatorname{Id}$ and $J\ad_x=\ad_xJ$ for any $x\in\n$, 
i.e., $J[x,y]=[x,Jy]=[Jx,y]$ for any $x,y\in\n$. 

Let $\pint:=g|_\n$ denote the induced inner product on $\n$. If $\z$ denotes 
the center of $\n$, let $\v:=\z^\perp$, so that $\n=\z\oplus\v$, orthogonal sum. 
Note that both $\z$ and $\v$ are $J$-invariant, as well as the commutator ideal 
$\n'=[\n,\n]$.

Let us define a skew-symmetric isomorphism $T$ of $\n$ in the following way: if 
we decompose $\z=\n'\oplus\a$ as an orthogonal sum, we set
\[ T|_{\v}=J|_{\v}, \quad T|_{\n'}=3J|_{\n'}, \quad T|_{\a}=T_0,  \]
where $T_0:\a\to\a$ is any skew-symmetric isomorphism of $\a$. It follows from 
Theorem \ref{bds} that $T$ is a left invariant Killing-Yano tensor on $N$. Moreover, according to Theorem \ref{nilp}, $T$ is not parallel.
 \end{proof}

\begin{rem}
We point out that the classification of complex 2-step nilpotent Lie algebras, up to isomorphism, is not known; furthermore, this is considered to be a wild problem in the literature (see for instance \cite{DLV}, where several partial results are collected).
\end{rem}

\

The next theorem will show that the complex 2-step nilpotent Lie groups exhaust 
the family of 2-step nilpotent Lie groups with left invariant metric admitting 
invertible KY tensors.

\begin{teo}\label{main2}
If a 2-step nilpotent Lie group $N$ equipped with a left invariant metric $g$ admits an invertible left invariant KY tensor then $N$ is a complex Lie group. Moreover, $g$ is Hermitian with respect to this complex structure.
\end{teo}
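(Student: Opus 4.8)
The plan is to build the complex structure directly from $T$ by polar decomposition and then to verify that it is bi-invariant. Write $P=(-T^2)^{1/2}$ for the positive-definite symmetric square root of $-T^2$ and set $J=TP^{-1}=P^{-1}T$. Since $T^*=-T$ one checks at once that $J^*=-J$ and $J^2=T^2P^{-2}=-\operatorname{Id}$, so $J$ is an orthogonal almost complex structure; in particular $\langle Jx,Jy\rangle=\langle x,y\rangle$, whence the metric is automatically Hermitian with respect to $J$ once $J$ is known to be integrable. By Theorem \ref{bds} the tensor $T$ preserves $\z$ and $\v$, and since $\n'$ is spanned by brackets $[x,y]$ with $x,y\in\v$, relation \eqref{KY1} gives $T(\n')\subseteq\n'$; as $P$ and $J$ are functions of $T$, they preserve $\z$, $\v$ and $\n'$ as well. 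Because every bracket with a central argument vanishes, the identities $J[x,y]=[Jx,y]=[x,Jy]$ hold trivially unless both arguments lie in $\v$, so the entire problem reduces to proving these identities for $x,y\in\v$.

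Write $A=T|_\v$ and $B=T|_{\n'}$. The engine of the argument is the identity $B^n[x,y]=3^n[A^nx,y]$ for all $n\ge0$ and $x,y\in\v$, proved by induction from \eqref{KY1} (the inductive step applies \eqref{KY1} to $A^nx\in\v$). Consequently $q(B)[x,y]=[q(3A)x,y]$ for every polynomial $q$. Specializing to $q(t)=-t^2$ gives $-B^2[x,y]=[(-9A^2)x,y]$; setting $M=-B^2$ and $N=-9A^2=-(3A)^2$ (both positive-definite, with finite spectra), the relation propagates to $M^k[x,y]=[N^kx,y]$, hence $p(M)[x,y]=[p(N)x,y]$ for all real polynomials $p$. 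Choosing $p$ to agree with $\sqrt{\,\cdot\,}$ on the finite set $\operatorname{spec}(M)\cup\operatorname{spec}(N)\subseteq(0,\infty)$ yields $p(M)=M^{1/2}=P|_{\n'}$ and $p(N)=N^{1/2}=3\,(P|_\v)$, whence $P|_{\n'}[x,y]=3[P|_\v x,y]$. The same scheme applied to the symmetry relation $[Ax,y]=[x,Ay]$ (which iterates to $[A^kx,y]=[x,A^ky]$) gives $[P|_\v x,y]=[x,P|_\v y]$. I expect this passage from the polynomial identities to the square-root identity to be the main obstacle: the decisive point is that a single interpolating polynomial realizes $\sqrt{\,\cdot\,}$ simultaneously on both finite spectra, so the transfer $B\leftrightarrow 3A$ survives the non-polynomial operation of taking $P$.

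With these in hand the conclusion is a short computation. From $P|_{\n'}[u,y]=3[P|_\v u,y]$ with $u=P|_\v^{-1}x$ one gets $P|_{\n'}[P|_\v^{-1}x,y]=3[x,y]$, hence $P|_{\n'}^{-1}[x,y]=\tfrac13[P|_\v^{-1}x,y]$, and therefore
\[
J|_{\n'}[x,y]=B\,P|_{\n'}^{-1}[x,y]=\tfrac13 B[P|_\v^{-1}x,y]=[A\,P|_\v^{-1}x,y]=[J|_\v x,y],
\]
using \eqref{KY1} and $A\,P|_\v^{-1}=J|_\v$; combining with the bracket-symmetry of $P|_\v$ gives also $[J|_\v x,y]=[x,J|_\v y]$. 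Thus $J[x,y]=[Jx,y]=[x,Jy]$ for all $x,y\in\n$, which says precisely that $(\n,J)$ is a complex Lie algebra and that the left-invariant $J$ is integrable (its Nijenhuis tensor vanishes by a one-line check from these relations). Hence $N$ is a complex Lie group, and since $J$ is orthogonal the metric $g$ is Hermitian with respect to it, completing the proof.
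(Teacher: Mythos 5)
Your proof is correct, but it follows a genuinely different route from the paper's. The paper works with an orthonormal basis of $\v$ putting $T|_\v$ into $2\times 2$ rotation blocks $\v_i=\operatorname{span}\{e_i,f_i\}$ with $Te_i=a_if_i$, $Tf_i=-a_ie_i$, $a_i>0$, and the heart of its argument is a rigidity statement: invertibility of $T$ together with \eqref{KY1} forces $[\v_i,\v_i]=0$ and $a_i=a_j$ whenever $[\v_i,\v_j]\neq 0$. Grouping blocks by eigenvalue then yields ideals $\n_j=W_j\oplus[W_j,W_j]$ on which $T^2$ is the scalar $-a_{i_j}^2$ (respectively $-9a_{i_j}^2$), and $J$ is defined by rescaling $T$ piece by piece. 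You replace all of this by functional calculus: $J=T(-T^2)^{-1/2}$ is the orthogonal polar factor of $T$, and the only real issue is whether the transfer rule $q(T|_{\n'})[x,y]=[q(3T|_\v)x,y]$, immediate for polynomials $q$ by induction on \eqref{KY1}, survives the non-polynomial operation of taking $P=(-T^2)^{1/2}$ --- which you handle correctly by interpolating the square root by a single polynomial on the union of the two finite spectra of $-T^2|_{\n'}$ and $-9T^2|_\v$. In fact the two constructions produce the same $J$ on $\n'\oplus\v$ (on each $W_j$ one has $P=a_{i_j}\operatorname{Id}$ and on $[W_j,W_j]$ one has $P=3a_{i_j}\operatorname{Id}$); on $\a$ the paper picks an arbitrary compatible $J_0$, while yours is canonical. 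What the paper's approach buys is the explicit structure developed along the way --- the decomposition $\n=\a\times\n_1\times\cdots\times\n_r$ and the matrix forms of $T$ and $J$, which are reused in the remarks following Theorem \ref{main2}. What yours buys is brevity and basis-independence: no eigenvalue bookkeeping, no case analysis on which of the brackets $[e_i,e_j],[e_i,f_j],[f_i,e_j],[f_i,f_j]$ vanish, and a transparent explanation of where the factor $3$ goes. Note that both arguments use invertibility of $T$ in an essential way: the paper to force all four of those brackets to be simultaneously non-zero, you to make $P$ (hence $P|_\v$ and $P|_{\n'}$) invertible.
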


\begin{proof}

Let $N$ be a $2$-step nilpotent Lie group equipped with a left invariant metric 
$g$ and a left invariant Killing-Yano tensor $T$. We denote by $\pint:=g|_\n$ 
the induced inner product on $\n$, and we also denote by $T$ the restriction 
$T|_\n$ of $T$ to $\n$. Therefore, according to Theorem \ref{bds}, $T$ preserves 
the center $\z$ of $\n$ and \eqref{KY1} holds.

Let us consider the orthogonal decomposition $\n=\a\oplus\n'\oplus \v$, where 
$\z=\a\oplus\n'$ and $\v=\z^\perp$. Note that due to \eqref{KY1}, $T$ preserves 
each of these subspaces, since $T$ is skew-symmetric. Therefore, there exists an 
orthonormal basis $\{e_1,\ldots,e_n,f_1,\ldots,f_n\}$ of $\v$ such that 
\[Te_i=a_i f_i,\quad Tf_i=-a_i e_i, \quad i=1,\ldots,n,\]
for some non-zero $a_i\in\R$. Moreover, interchanging $e_i$ with $f_i$ if 
necessary, we may assume that $a_i>0$ for all $i$ and, after a further 
reordering of the basis, we may assume also that $0<a_1\leq a_2\leq\cdots\leq a_n$. 
Let us denote $\v_i=\text{span}\{e_i,f_i\}$, so that $\v=\oplus_i\v_i$.

It follows from \eqref{KY1} that $[\v_i,\v_i]=0$. Indeed, 
$T[e_i,f_i]=3[a_if_i,f_i]=0$, hence $[e_i,f_i]=0$. In particular, for each $i$ 
there exists $j$ such that $[\v_i,\v_j]\neq 0$, since otherwise $\v_i\subset\z$. 
Let $(i,j)$ be a pair of indices such that $[\v_i,\v_j]\neq 0$. Then at least 
one of $[e_i,e_j]$, $[e_i,f_j]$, $[f_i,e_j]$ and $[f_i,f_j]$ is non-zero. Then, 
according to \eqref{KY1}, we have that
\begin{gather*}
T[e_i,e_j] = 3a_i[f_i,e_j]=3a_j[e_i,f_j], \\
T[f_i,f_j]=-3a_i[e_i,f_j]=-3a_j[f_i,e_j].
\end{gather*}
Since $T$ is invertible, it follows that all four Lie brackets are non-zero. 
Furthermore, we have that 
\[ [e_i,f_j]=\frac{a_i}{a_j}[f_i,e_j],\qquad [e_i,f_j]=\frac{a_j}{a_i}[f_i,e_j].\]
As the constants are positive, we obtain $a_i=a_j$. Hence, 
$[e_i,e_j]=-[f_i,f_j]$, $[e_i,f_j]=[f_i,e_j]$ and 
$[\v_i,\v_j]=\text{span}\{[e_i,e_j], [e_i,f_j]\}$, with $\dim [\v_i,\v_j]=2$, 
and $[\v_i,\v_j]$ is $T$-invariant.

Let us define a series of integers $\{i_j:j=1,\ldots,r\}$ in the following 
way: 
\[ i_1=1, \quad i_j=\min\{k\mid a_k\neq a_{i_{j-1}}\}  \]
We define now the following subspaces of $\v$:
\[ W_j=\bigoplus_{i:\,a_i=a_{i_j}}\v_i. \]
Clearly $\v=\oplus_j W_j$ and, moreover, $[W_j,W_k]=0$ if $j\neq k$. It then 
follows that the Lie subalgebras $\n_j$ of $\n$ defined by $\n_j=W_j\oplus 
[W_j,W_j]$ are in fact ideals of $\n$.

Next note that $T^2|_{W_j}=-a_{i_j}^2\operatorname{Id}$, while 
$T^2|_{[W_j,W_j]}=-9a_{i_j}^2\operatorname{Id}$, according to \eqref{KY1}. As a 
consequence, since $\n'=[\v,\v]$, we have that $\n'=\oplus_j [W_j,W_j]$ and 
furthermore,
\[ \n'\oplus\v=\oplus_j \n_j, \qquad \n=\a\oplus \n_1 \oplus \cdots \oplus \n_r,\]
a direct product of ideals. In order to show that $\n$ carries a bi-invariant 
complex structure, set $J:\n\to\n$ as follows:
\begin{enumerate}
\item[(i)] $J|_\a$ is any almost complex structure $J_0$ on $\a$ compatible 
with $\pint$;
 \item[(ii)] $J|_{W_j}=\frac{1}{a_{i_j}}T|_{W_j}$;
 \item[(iii)] $J|_{[W_j,W_j]}=\frac{1}{3a_{i_j}}T|_{[W_j,W_j]}$.
\end{enumerate}
Clearly, $J^2=-\operatorname{Id}$. In order to verify that it is bi-invariant, 
we need only check that $J[x,y]=[Jx,y]$ for any $x,y\in W_j$ for some $j$. 
Indeed,
\[J[x,y]=\frac{1}{3a_{i_j}}T[x,y]=\frac{1}{3a_{i_j}}3[Tx,y]=\frac{1}{a_{i_j}}
a_{i_j}[Jx,y]=[Jx,y].\]

\smallskip

It is clear from the construction that $J$ is skew-symmetric with respect to $\pint$. 
\end{proof}

\medskip

\begin{rem}
It follows from the proof of Theorem \ref{main2} that $\n$ can be decomposed as $\n=\mathfrak{a}\times \n_1\times \cdots \times \n_r$, an orthogonal product of $T$-invariant ideals, with $\mathfrak{a}$ abelian and $\n_i$ 2-step nilpotent. As a consequence, if $\n$ is irreducible (i.e., it cannot be decomposed as an orthogonal direct product of ideals) then $\mathfrak a=0$, $r=1$, and the endomorphisms $T$ and $J$ of $\n$ can be written, according to the orthogonal decomposition $\n=\z\oplus \v$, as
\[ T=\begin{pmatrix} 0&-3a\operatorname{Id}_p& & & & \\ 3a\operatorname{Id}_p& 
0 & & & &\\ &  & 0 & -a\operatorname{Id}_q  & &\\  & &  a\operatorname{Id}_q & 0 
& & 
\end{pmatrix}, \quad   
J=\begin{pmatrix} 0&-\operatorname{Id}_p& & & & \\ \operatorname{Id}_p& 0 & & & 
&\\ &  & 0 & -\operatorname{Id}_q  & &\\  & &  \operatorname{Id}_q & 0 & & 
\end{pmatrix},
\]
in some orthonormal basis of $\z$ and $\v$, for some $a\neq 0$. Here 
$\operatorname{Id}_t$ denotes the $t\times t$ identity matrix, 
$p=\dim \z$ and $q=\dim \v$.
\end{rem}

\begin{rem}
It was proved in \cite{AG} that any left invariant Hermitian metric on a unimodular 
complex Lie group is balanced, i.e. its fundamental 2-form is co-closed. It thus follows that the Hermitian metric $g$ in Theorem \ref{main2} is 
balanced with respect to the bi-invariant complex structure $J$.
\end{rem}

\begin{rem}
It is known that if $T$ is a KY tensor on a Riemannian manifold $(M,g)$ then 
$S:=-T^2$ is a Killing tensor on $M$ (see for instance \cite{Sem,DM}). This 
means that $S$ is symmetric and $g((\nabla_X S)X,X)=0$ for any vector field $X$ 
on $M$. In contrast with the KY case, it was shown in \cite{DM} that many 2-step 
nilpotent Lie groups, not only the complex ones, admit Killing tensors for 
certain left invariant metrics.
\end{rem}

\begin{rem}
A symmetric $(1,1)$-tensor $B$ on a Riemannian manifold $(M,g)$ is called a 
\textit{Codazzi tensor} if it satisfies $(\nabla_X B)Y =  (\nabla_Y B)X $ for 
any vector fields $X,Y$ on $M$. In the case of a left invariant metric on a 
2-step nilpotent Lie group, an analogous result to Theorem \ref{bds} holds. 
Indeed, a symmetric endomorphism of the Lie algebra $\n=\v\oplus \z$ is a 
Codazzi tensor if and only if $B(\z)\subseteq \z$ and $B[x,y]=[Bx,y]$ for all 
$x,y\in\n$. However, in this case, it is easy to see that these conditions also 
imply that $B$ is parallel (compare \cite[Proposition 2]{D}). 
\end{rem}

\

\section{Space of solutions}\label{dim=1}

In this section we will show that for certain 2-step nilpotent complex Lie groups equipped with left invariant metrics, the space of Killing-Yano tensors is one-dimensional.

We will be concerned with 2-step nilpotent complex Lie groups arising from graphs. We recall briefly this construction, introduced in \cite{DaM}.

Let $\G=\G(S,E)$ be a simple, undirected graph with set of vertices $S=\{e_1,\ldots,e_n\}$ and set of edges $E=\{z_1,\ldots,z_m\}$, with $m\geq 1$. Let $\mathcal{B}=S\cup E$ and for a field $\mathbb{K}$ with characteristic different from 2, consider the Lie algebra $\n_\G^{\mathbb K}$ whose underlying $\mathbb K$-vector space has $\mathcal B$ as a basis, and the Lie bracket is given by:
\begin{align} 
& \circ  \text{for } i<j, \quad [e_i,e_j]=\begin{cases} z_k, \quad  \text{ if } 
z_k \text{ is the edge joining } e_i \text{ and } e_j,\\
              0, \qquad   \text{otherwise}. 
             \end{cases} \\
& \circ z_r \text{ is central for any } r. \nonumber
\end{align}
Clearly, $\n_\G^{\mathbb K}$ is a 2-step nilpotent Lie algebra, and it satisfies the following very strong condition:
\begin{equation}\label{property-graph}
 \text{if } [e_i,e_j]=[e_r,e_s]\neq 0 \text{ then } i=r \text{ and } j=s.
\end{equation}
Many properties of $\n_\G^{\mathbb K}$ can be deduced directly from $\G$. For instance, the center of $\n_\G^{\mathbb K}$ is spanned by $E\cup \{\text{isolated vertices}\}$, and $\n_\G^{\mathbb K}$ is indecomposable if and only if $\G$ is connected. Moreover, it was proved in \cite{Ma} that given two graphs $\G_1$ and $\G_2$, the Lie algebras $\n_{\G_1}^{\mathbb K}$ and $\n_{\G_2}^{\mathbb K}$ are isomorphic if and only if $\G_1$ and $\G_2$ are isomorphic.

\begin{example}\label{heis}
The 3-dimensional Heisenberg Lie algebra over $\mathbb K$ arises from the graph with two vertices joined by one edge. Moreover, it is clear from \eqref{property-graph} that the $(2n+1)$-dimensional Heisenberg Lie algebra arises from a graph if and only if $n=1$.
\end{example}

\

We will study next KY tensors on 2-step nilpotent complex Lie algebras arising from graphs. So, let $\G=\G(S,E)$ be a graph as before, and let $\n_\G^{\mathbb C}$ be the \textit{complex} 2-step nilpotent Lie algebra arising from $\G$. When considered as a \textit{real} Lie algebra, it will be denoted simply by $\n_\G$. It has a real basis $\mathcal{B}'=\{e_1,f_1,\ldots,e_n,f_n\}\cup\{z_1,w_1,\ldots,z_r,w_r\}$ such that its natural bi-invariant complex structure $J$ is given by $Je_i=f_i,\, Jz_i=w_i$. The Lie bracket on $\n_\G$ satisfies
\[ [e_i,e_j]=-[f_i,f_j], \quad [e_i,f_j]=[f_i,e_j], \quad \text{ for any } i,j.\]
Note that $\n_\G$ is not a real 2-step nilpotent Lie algebra arising from a graph, since it does not satisfy \eqref{property-graph}. 

Let us define an inner product $\pint$ on $\n_\G$ by declaring the basis $\mathcal B'$ to be orthonormal. If $\G$ has no isolated vertices then the center $\z$ of $\n_\G$ is $\text{span}\{z_1,w_1,\ldots,z_r,w_r\}$, and its orthogonal complement $\v$ is $\text{span}\{e_1,f_1,\ldots,e_n,f_n\}$. 

\medskip

According to Proposition \ref{main1}, $(\n_\G, \pint)$ admits KY tensors. In the next result we determine the dimension of the space of KY tensors on $(\n_\G, \pint)$ when $\G$ is connected.

\begin{teo}\label{graph}
When the graph $\G$ is connected, the space of KY tensors on $(\n_\G, \pint)$ has dimension 1.
\end{teo}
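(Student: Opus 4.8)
The plan is to prove that every KY tensor on $(\n_\G,\pint)$ is a real scalar multiple of the tensor $T_J$ furnished by Proposition \ref{ida}. Since $\G$ is connected with $m\ge 1$ edge it has no isolated vertex, so $\z=\n'$, the summand $\a$ is trivial, and $T_J$ is given by $T_J|_\v=J$, $T_J|_\z=3J$. By Theorem \ref{bds} a skew-symmetric $T$ is KY if and only if it preserves $\z$ and $\v$ and satisfies \eqref{KY1}, which in turn forces $[Tx,y]=[x,Ty]$ for $x,y\in\v$. Writing $A=T|_\v$ and $C=T|_\z$, the endomorphism $C$ is determined on $\z=\n'$ by $C[x,y]=3[Ax,y]$, so the whole problem reduces to describing the admissible $A$.

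First I would split off the $J$-linear and $J$-antilinear parts of $T$. Using the bi-invariance of $J$ (that is, $[Jx,y]=J[x,y]$) one checks that the involution $T\mapsto -JTJ$ carries KY tensors to KY tensors; consequently its $\pm 1$-eigenprojections $T^{\pm}=\tfrac12(T\mp JTJ)$ are again KY, with $T^{+}$ commuting and $T^{-}$ anticommuting with $J$, and both preserving the $J$-invariant subspaces $\z,\v$. It therefore suffices to analyse $J$-linear and $J$-antilinear KY tensors separately, showing that the former are precisely the multiples of $T_J$ and that the latter vanish.

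In the $J$-linear case I regard $A$ as a skew-Hermitian matrix $(A_{cd})$ on $\v\cong\C^{n}$, with $f_i=Je_i$ acting as $ie_i$, recalling that the bracket is $\C$-bilinear and that, by the rigidity \eqref{property-graph}, each $z_k$ occurs as a bracket of $e$-vectors only for the unique edge it represents. Comparing the (complex) coefficient of $z_k$ in $[Ae_i,e_j]=[e_i,Ae_j]$ for an edge $k=\{i,j\}$ yields $A_{ii}=A_{jj}$, and \emph{connectedness} of $\G$ then makes all diagonal entries equal to a single purely imaginary scalar $\lambda$. Comparing the coefficient of $z_m$ for an edge $m=\{a,b\}$ and a third index $i\notin\{a,b\}$ forces $A_{ai}=0$, so every off-diagonal entry $A_{pq}$ vanishes unless $p$ is a leaf whose only neighbour is $q$; these leftover entries are killed by the degenerate instance $[Ae_p,e_p]=0$ of \eqref{KY1} (take $x=y=e_p$), since $[e_q,e_p]$ and $[f_q,e_p]=J[e_q,e_p]$ are linearly independent, and skew-Hermitian symmetry then removes $A_{pq}$ as well. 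Thus $A=\lambda\,\mathrm{Id}$, whence $C=3\lambda\,\mathrm{Id}$ and, $\lambda$ being imaginary, $T=tT_J$. The $J$-antilinear case proceeds by the same two mechanisms: there $A$ is represented by a \emph{complex antisymmetric} matrix, so its diagonal is automatically zero, the coefficient comparison annihilates all off-diagonal entries except those at leaves, and $[Ae_p,e_p]=0$ annihilates those, forcing $A=0$ and hence $C=0$. Combining the two cases shows that the space of KY tensors is exactly $\R\,T_J$, of dimension $1$.

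The step I expect to be delicate is converting the purely local rigidity \eqref{property-graph} into a global statement. The coefficient comparison constrains an entry $A_{ai}$ only when $a$ has a neighbour distinct from $i$, and so it intrinsically leaves undetermined the single entry joining a leaf to its neighbour; this is precisely why the auxiliary relation $[Te_p,e_p]=0$ (together with skew-Hermitian symmetry in the $J$-linear case) is required to close the argument. Connectedness enters at exactly one point, to promote the edgewise equalities $A_{ii}=A_{jj}$ into one global scalar $\lambda$; for a disconnected graph the same analysis would instead yield one free scalar per connected component, which is why the connectedness hypothesis cannot be dropped.
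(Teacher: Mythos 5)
Your proof is correct and arrives at the same conclusion by the same underlying mechanism --- Theorem \ref{bds} plus coefficient comparison against the rigidity property \eqref{property-graph}, with connectedness used only to propagate one scalar --- but it is packaged differently, so a comparison is worthwhile. The paper works directly with the real entries $\la Te_i,e_j\ra$, $\la Te_i,f_j\ra$, etc., and splits into cases by adjacency of the pair $(i,k)$: when $[e_i,e_k]\neq 0$ it uses $[Te_i,e_i]=0$ (your ``degenerate instance'' of \eqref{KY1}, but applied at \emph{every} vertex, not only at leaves), and when $[e_i,e_k]=0$ it uses $[Te_i,e_l]=[e_i,Te_l]$ for a neighbour $l$ of $k$, which is exactly your third-index comparison. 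Because the relation $[Te_i,e_i]=0$ is exploited at all vertices, the leaf-versus-non-leaf dichotomy that you must treat separately (and correctly identify as the delicate point) simply never arises there; diagonality $Te_i=a_if_i$, $Tf_i=-a_ie_i$ comes out uniformly, and only then does connectedness chain the $a_i$ together, just as in your argument. What your proof adds is the preliminary splitting $T=T^++T^-$ under the involution $T\mapsto -JTJ$; this involution does preserve KY tensors (it is skew-symmetric, preserves $\z$ and $\v$, and satisfies \eqref{KY1} by bi-invariance of $J$), but that is a genuine lemma you should write out rather than assert. The splitting buys structural insight: the $J$-linear case is essentially the paper's subsequent Proposition that $\pm J$ is the only orthogonal bi-invariant complex structure on $\n_\G$, one-dimensionality becomes the transparent statement that skew-Hermitian scalar matrices form a real line, and the antilinear part is ruled out by complex antisymmetry. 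The paper's route is more elementary and self-contained, avoiding both the involution lemma and the special leaf analysis.
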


\begin{proof}
Let $T$ be a KY tensor on $(\n_\G, \pint)$. According to Theorem \ref{bds}, $T$ preserves both $\z$ and $\v$ and, moreover, 
$T[x,y]=3[Tx,y]=3[x,Ty]$ for any $x,y\in\v$. Since $\{e_1,f_1,\ldots,e_n,f_n\}$ is an orthonormal basis of $\v$, we have that
\begin{gather}\label{formulas}
 Te_i=\sum_{j=1}^n (\langle Te_i,e_j\rangle e_j+\langle Te_i,f_j\rangle f_j), \\
 Tf_i=\sum_{j=1}^n (\langle Tf_i,e_j\rangle e_j+\langle Tf_i,f_j\rangle f_j). \nonumber
\end{gather}

Let us fix two indices $i,k\in\{1,\ldots,n\}$ with $i\neq k$. We will show that 
\begin{equation} \label{Te}
\langle Te_i,e_k\rangle=\langle Te_i,f_k\rangle =0  \quad \text{and} \quad \langle Tf_i,e_k\rangle=\langle Tf_i,f_k\rangle =0.
\end{equation}
First, let us assume that $[e_i,e_k]\neq 0$. It follows from $[Te_i,e_i]=0$ that 
\[  \sum_j \langle Te_i,e_j\rangle [e_j,e_i]+\sum_j \langle Te_i,f_j\rangle [f_j,e_i]=0. \]
The first sum in the left-hand side lies in $\text{span}\{z_1,\ldots,z_r\}$, whereas the second sum lies in $\text{span}\{w_1,\ldots,w_r\}$. Therefore, each sum vanishes and, taking \eqref{property-graph} into account, we have that $\langle Te_i,e_k\rangle [e_k,e_i]=0$ and $\langle Te_i,f_k\rangle [f_k,e_i]=0$, which imply $\langle Te_i,e_k\rangle=\langle Te_i,f_k\rangle=0$. Beginning with $[Tf_i,f_i]=0$, the same computation gives $\langle Tf_i,e_k\rangle=\langle Tf_i,f_k\rangle=0$.

\medskip 

Let us now assume that $[e_i,e_k]=0$. There exists $l\neq i$ such that $[e_k,e_l]\neq 0$, since otherwise $e_k$ would be an isolated vertex of $\G$. Since $[Te_i,e_l]=[e_i,Te_l]$, and using \eqref{formulas}, we obtain
\[ \sum_{j=1}^n \langle Te_i,e_j\rangle [e_j,e_l]+\sum_{j=1}^n\langle Te_i,f_j\rangle [f_j,e_l]=\sum_{j=1}^n \langle Te_l,e_j\rangle [e_i,e_j]+\sum_{j=1}^n\langle Te_l,f_j\rangle [e_i,f_j]. \]
The first sum in each side lies in $\text{span}\{z_1,\ldots,z_r\}$, whereas the second sum in each side lies in $\text{span}\{w_1,\ldots,w_r\}$. Therefore we obtain that
\begin{gather}
 \sum_{j=1}^n \langle Te_i,e_j\rangle [e_j,e_l]=\sum_{j=1}^n \langle Te_l,e_j\rangle [e_i,e_j], \label{first}\\
 \sum_{j=1}^n\langle Te_i,f_j\rangle [f_j,e_l]=\sum_{j=1}^n\langle Te_l,f_j\rangle [e_i,f_j].\label{second}
\end{gather}
The non-zero element $[e_k,e_l]$ appears in the left-hand side of \eqref{first}, but if it appeared also in the right-hand side, according to \eqref{property-graph} we would have that $\{k,l\}=\{i,j\}$ for some $j$, and this is impossible since both $k$ and $l$ are different from $i$. It follows that $\langle Te_i,e_k\rangle=0$. The same reasoning can be applied in \eqref{second} (since also $[f_k,e_l]\neq 0$), hence we obtain $\langle Te_i,f_k\rangle=0$.

Analogously, beginning with $[Tf_i,f_l]=[f_i,Tf_l]$, we prove that $\langle Tf_i,e_k\rangle=\langle Tf_i,f_k\rangle =0$.

\medskip

Thus, \eqref{Te} holds and since $T$ is skew-symmetric we have that $Te_i=a_if_i$ and $Tf_i=-a_ie_i$ for some $a_i\in\R$, $i=1,\ldots,n$. We will show next that $a_i=a_j$ for all $i,j$. Let us assume first that $[e_i,e_j]=z_k$. It follows from \eqref{KY1} that $T[e_i,e_j]=3[Te_i,e_j]=3[e_i,Te_j]$ and therefore we obtain that 
\[ Tz_k=3a_iw_k=3a_jw_k. \]
Therefore, $a_i=a_j$. If, on the other hand, $[e_i,e_j]=0$, we can choose a sequence of indices $i=l_0<l_1<\cdots <l_{s-1}<l_s=j$ such that $[e_{l_t},e_{l_{t+1}}]\neq 0$, since $\G$ is connected. As a consequence, we have that $a_i=a_{l_1}=\cdots=a_{l_{s-1}} =a_j$. Setting $a:=a_i$ for any $i$, the KY tensor $T$ is given by 
\[ Te_i=af_i, \quad Tf_i=-ae_i, \quad Tz_k=3aw_k,\quad Tw_k=-3az_k, \quad i=1,\ldots,n,\quad k=1,\ldots, m.\]
Hence the space of KY tensors on $(\n_\G, \pint)$ has dimension 1. 
\end{proof}

\begin{cor}
 When $\G$ is connected, any non-zero Killing-Yano tensor on $(\n_\G, \pint)$ is invertible, thus non-parallel.
\end{cor}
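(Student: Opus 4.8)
The plan is to deduce both assertions directly from the explicit description of KY tensors furnished by the proof of Theorem \ref{graph}. Since $\G$ is connected, that proof shows that every KY tensor $T$ on $(\n_\G,\pint)$ is given by
\[ Te_i=af_i,\quad Tf_i=-ae_i,\quad Tz_k=3aw_k,\quad Tw_k=-3az_k, \]
for a single scalar $a\in\R$; hence if $T\neq 0$, then necessarily $a\neq 0$.

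To obtain invertibility, I would observe that on the orthogonal decomposition $\n_\G=\z\oplus\v$ the above formula yields $T^2|_\v=-a^2\operatorname{Id}$ and $T^2|_\z=-9a^2\operatorname{Id}$. As $a\neq 0$, the endomorphism $T^2$ is invertible on each summand, hence on all of $\n_\G$, and therefore $T$ itself is invertible.

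For the non-parallel statement I would then appeal to Theorem \ref{nilp}. The Lie algebra $\n_\G$ is non-abelian and nilpotent (it is $2$-step nilpotent, since $m\geq 1$ forces $\n_\G'\neq\{0\}$), so by that theorem it admits no skew-symmetric invertible parallel tensor for any inner product, in particular for $\pint$. Since $T$ is skew-symmetric and, by the previous step, invertible, $T$ cannot be parallel. Equivalently, one may invoke part $(b)$ of Theorem \ref{main1}, noting that here $\n_2=\operatorname{Im}T=\n_\G$ is $2$-step nilpotent rather than abelian.

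No step presents a genuine obstacle: the corollary is an immediate consequence of the explicit formula obtained in Theorem \ref{graph} together with the non-existence result of Theorem \ref{nilp}. The only point worth stating carefully is that a single nonzero scalar $a$ controls the whole tensor, so that invertibility cannot fail on any block once $a\neq 0$.
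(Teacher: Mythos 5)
Your proposal is correct and is essentially the argument the paper intends: the corollary follows immediately from the explicit form $Te_i=af_i$, $Tf_i=-ae_i$, $Tz_k=3aw_k$, $Tw_k=-3az_k$ established in the proof of Theorem \ref{graph}, since $T\neq 0$ forces $a\neq 0$ and hence invertibility, after which Theorem \ref{nilp} (or Theorem \ref{main1}$(b)$) rules out parallelism. The paper leaves this unproved precisely because it is this immediate, and your write-up fills in exactly the expected steps.
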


\medskip

Note that in the proof of Theorem \ref{graph}, the main properties used were the skew-symmetry of $T$ and equation \eqref{KY1}. The constant appearing in this formula is not relevant for the proof, and as a consequence we obtain the following result (compare with \cite[Section 5.1]{De}).

\begin{prop}
If $I$ is a bi-invariant complex structure on $\n_\G$ which is Hermitian with respect to $\pint$, then $I=\pm J$. 
\end{prop}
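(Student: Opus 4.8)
The plan is to transcribe the argument of Theorem \ref{graph} almost verbatim, replacing the Killing--Yano identity \eqref{KY1} by the bi-invariance of $I$, and then to use $I^2=-\operatorname{Id}$ to pin down the single scalar that survives. First I would record the structural properties of $I$ that make the earlier argument applicable. Since $I$ is bi-invariant, $I[x,y]=[Ix,y]=[x,Iy]$ for all $x,y\in\n_\G$; in particular, if $z$ lies in the center $\z$, then $[x,Iz]=I[x,z]=0$ for every $x$, so $Iz\in\z$ and $I$ preserves $\z$. Because $I$ is Hermitian it is skew-symmetric with respect to $\pint$, and hence it also preserves $\v=\z^\perp$. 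Thus on $\v$ the endomorphism $I$ satisfies exactly the two hypotheses driving the proof of Theorem \ref{graph}: it is skew-symmetric, and it obeys $I[x,y]=[Ix,y]=[x,Iy]$ for $x,y\in\v$. As the remark preceding this proposition points out, the constant $3$ in \eqref{KY1} is irrelevant to that proof: only the relations $[Ie_i,e_i]=0$ and $[Ie_i,e_l]=[e_i,Ie_l]$ are ever used, and both hold here with the constant equal to $1$.

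Running the proof of Theorem \ref{graph} with $I$ in place of $T$ then gives $Ie_i=a\,f_i$ and $If_i=-a\,e_i$ for scalars $a_i$ which, by connectedness of $\G$, all coincide with a single value $a\in\R$. At this point the one genuinely new ingredient enters: imposing $I^2=-\operatorname{Id}$ on $\v$ yields $-a^2 e_i=I^2 e_i=-e_i$, so that $a^2=1$ and $a=\pm 1$.

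Finally I would determine $I$ on $\z$. Since $\G$ is connected with at least one edge it has no isolated vertices, so $\z=\n'$ and every basis vector of $\z$ is a Lie bracket of elements of $\v$: if $[e_i,e_j]=z_k$ then $Iz_k=I[e_i,e_j]=[Ie_i,e_j]=a[f_i,e_j]=a\,w_k$, and likewise $Iw_k=I[f_i,e_j]=[If_i,e_j]=-a[e_i,e_j]=-a\,z_k$. Comparing these formulas with $Je_i=f_i$ and $Jz_k=w_k$ shows $I=J$ when $a=1$ and $I=-J$ when $a=-1$, i.e. $I=\pm J$. The only step requiring real care is the very first one—checking that $I$ respects the orthogonal splitting $\n_\G=\z\oplus\v$—since everything afterward is a direct replay of Theorem \ref{graph}, supplemented only by the normalization $a=\pm 1$ forced by $I^2=-\operatorname{Id}$.
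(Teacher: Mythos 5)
Your proposal is correct and follows exactly the route the paper intends: the remark before the proposition notes that the proof of Theorem \ref{graph} uses only skew-symmetry and the bracket identity with an irrelevant constant, and you replay that argument with the constant $1$, adding the normalization $a=\pm 1$ forced by $I^2=-\operatorname{Id}$ and the explicit identification on $\z=\n'$. Your preliminary check that bi-invariance plus skew-symmetry makes $I$ preserve the splitting $\z\oplus\v$ is a detail the paper leaves implicit (it follows there from Theorem \ref{bds}), and you supply it correctly.
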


We learnt recently that V. del Barco and A. Moroianu obtained in \cite{DM1} the uniqueness, up to sign, of the orthogonal bi-invariant complex structure on a 2-step nilpotent metric Lie group which is de Rham irreducible. 

\medskip

\begin{rem}
As mentioned in Example \ref{heis}, the only complex Heisenberg Lie algebra $\h_{2n+1}(\C)$ which can be obtained from a graph is $\h_3(\C)$. However, it is easy to see that for $n\geq 2$, $\h_{2n+1}(\C)$ with its canonical $H$-type Hermitian metric has a space of KY tensors of dimension 1.
\end{rem}

% \medskip
% 
% \begin{rem}
% 
% \end{rem}

\

\

\end{document}